\def\Xint#1{\mathchoice 
{\XXint\displaystyle\textstyle{#1}}%
{\XXint\textstyle\scriptstyle{#1}}%
{\XXint\scriptstyle\scriptscriptstyle{#1}}%
{\XXint\scriptscriptstyle\scriptscriptstyle{#1}}%
\!\int} 
\def\XXint#1#2#3{{\setbox0=\hbox{$#1{#2#3}{\int}$} 
\vcenter{\hbox{$#2#3$}}\kern-.5\wd0}} 
\def\dashint{\Xint-}
\def\avgint{\Xint-}
\newcommand{\bey}{\begin{eqnarray*}}
\newcommand{\eey}{\end{eqnarray*}}
\newcommand{\ba}{\begin{align}}
\newcommand{\ea}{\end{align}}
\newcommand{\bea}{\begin{align*}}
\newcommand{\ena}{\end{align*}}
\newcommand{\be}{\begin{equation}}
\newcommand{\ee}{\end{equation}}
\newcommand{\R}{\mathbb R}
\newcommand{\M}{\mathcal M }
\newcommand{\bc}{\begin{center}}
\newcommand{\ec}{\end{center}}
\newcommand{\vf}{\vec{f}}
\newcommand{\vw}{\vec{w}}
\newcommand{\vp}{{\vec{p}}}
\newcommand{\supp}{\mathrm{supp}}
\newtheorem{theorem}{Theorem}[section]
\newtheorem{lemma}[theorem]{Lemma}
\newtheorem{corollary}[theorem]{Corollary}
\theoremstyle{definition}
\theoremstyle{remark}
\newtheorem{remark}[theorem]{Remark}
\numberwithin{equation}{section}
\begin{document}

\author{David Cruz-Uribe, OFS}
\address{David Cruz-Uribe, OFS \\ Department of Mathematics \\ University of Alabama \\
Tusca\-loosa, AL 35487, USA}
\email{dcruzuribe@ua.edu}

\author{Kabe Moen}
\address{Kabe Moen \\Department of Mathematics \\ University of Alabama \\
  Tuscaloosa, AL 35487, USA}
\email{kabe.moen@ua.edu}

\subjclass[2000]{42B20, 42B25}

\title[A multilinear reverse H\"older inequality]
{A multilinear reverse H\"older inequality with applications to
  multilinear weighted norm inequalities
}
\date{10/14/2017}

\begin{abstract} 
  In this note we prove a multilinear version of the reverse H\"older
  inequality in the theory of Muckenhoupt $A_p$ weights.  We give two
  applications of this inequality to the study of multilinear weighted
  norm inequalities.  First, we prove a structure theorem for
  multilinear $A_{\vec{p}}$ weights; second, we give a new sufficient
  condition for multilinear, two-weight norm inequalities for the
  maximal operator.
\end{abstract}

\thanks{The first author is supported by NSF Grant DMS-1362425 and research funds from the
  Dean of the College of Arts \& Sciences, the University of Alabama. The second author is supported by the Simons Foundation.}

\maketitle

\section{Introduction}
\label{section:intro}

The purpose of this note is to prove a multilinear version of the
reverse H\"older inequality in the theory of the Muckenhoupt $A_p$
classes.  We briefly recall the definitions; for complete details,
see~\cite{duoandikoetxea01}. For $1<p<\infty$, a weight
$w\in A_p$ if 
\[ [w]_{A_p} = \sup_Q \left( \avgint_Q w\,dx\right)
\left( \avgint_Q w^{1-p'}\,dx\right)^{p-1} < \infty; \]
here and below, the supremum is taken over all cubes in $\R^n$ with sides
parallel to the coordinate axes.  We denote the union of all the $A_p$
classes by $A_\infty$.   If $w\in A_\infty$, then it satisfies the
reverse H\"older inequality:  there exists $s>1$ such that
\[ [w]_{RH_s} = \sup_Q \left( \avgint_Q w^s\,dx\right)^{\frac{1}{s}}
\left( \avgint_Q w\,dx\right)^{-1} < \infty.  \]

Our main result extends this condition to $m$-tuples of $A_\infty$
weights.

\begin{theorem} \label{thm:multRH}
Given $1<s_1,\ldots,s_m<\infty$ such that $\sum\frac{1}{s_i}=1$, suppose $w_i \in RH_{s_i}$.  Then for all cubes $Q$,
\begin{equation} \label{eqn:multRH1}
 \prod_{i=1}^m \left(\avgint_Q w_i^{s_i}\,dx\right)^{\frac{1}{s_i}}
\leq C\avgint_Q \prod_{i=1}^m w_i\,dx. 
\end{equation}
\end{theorem}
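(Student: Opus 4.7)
The plan is to combine Jensen's inequality for the exponential function with the $A_\infty$/Fujii--Wilson characterization of reverse H\"older weights, plus one application of the single-weight reverse H\"older inequality to each $w_i$.

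First, since $dx/|Q|$ is a probability measure on $Q$ and $\exp$ is convex, Jensen's inequality yields
\[
\avgint_Q \prod_{i=1}^m w_i\,dx = \avgint_Q \exp\!\Bigl(\sum_{i=1}^m \log w_i\Bigr)\,dx \geq \prod_{i=1}^m \exp\!\Bigl(\avgint_Q \log w_i\,dx\Bigr).
\]
This reduces the problem to comparing the geometric mean $\exp(\avgint \log w_i)$ to the $L^{s_i}$ mean $(\avgint w_i^{s_i})^{1/s_i}$.

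Next, I invoke the fact that $w_i \in RH_{s_i}$ implies $w_i \in A_\infty$, and that for any $A_\infty$ weight $w$ one has the Fujii--Wilson-type inequality
\[
\avgint_Q w\,dx \leq K_w \exp\!\Bigl(\avgint_Q \log w\,dx\Bigr),
\]
with $K_w$ depending only on the reverse H\"older/$A_\infty$ constant of $w$. This is one of the standard equivalent characterizations of $A_\infty$; it can be derived from the reverse H\"older condition by noting that $t\mapsto (\avgint_Q w^t)^{1/t}$ is continuous and monotone on $(0,\infty)$ and tends to $\exp(\avgint_Q \log w)$ as $t\to 0^+$, so the quantitative control at $t=s_i$ provided by $[w_i]_{RH_{s_i}}$ propagates to $t=0^+$.

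Finally, applying the ordinary reverse H\"older inequality to each factor gives
\[
\Bigl(\avgint_Q w_i^{s_i}\,dx\Bigr)^{1/s_i} \leq [w_i]_{RH_{s_i}}\,\avgint_Q w_i\,dx.
\]
Chaining the three estimates, taking the product over $i$, and collecting constants yields \eqref{eqn:multRH1} with $C = \prod_{i=1}^m K_{w_i}\,[w_i]_{RH_{s_i}}$.

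The main non-trivial ingredient is the Fujii--Wilson step: obtaining a quantitative bound on $\avgint_Q w_i/\exp(\avgint_Q \log w_i)$ in terms of $[w_i]_{RH_{s_i}}$. This is where the real work is hidden, though it is standard in the modern $A_\infty$ theory. It is worth observing that this proof does not explicitly use the hypothesis $\sum 1/s_i = 1$; that condition ensures the dimensional homogeneity of \eqref{eqn:multRH1} and is natural for the multilinear applications to come.
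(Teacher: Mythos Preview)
Your proof is correct and takes a genuinely different route from the paper's. The paper argues by rescaling: using Lemma~\ref{lemma:jn} it finds $0<r<1$ with $w_i^{rs_i}\in A_2\cap RH_{1/r}$, applies the reverse H\"older inequality and then the $A_2$ condition to pass to negative powers, uses multilinear H\"older with exponents $s_i$ on those negative powers (this is where $\sum 1/s_i=1$ enters), and finally H\"older's inequality twice more to return to $\avgint_Q\prod_i w_i$. Your argument instead routes everything through the geometric mean: Jensen gives $\avgint_Q\prod_i w_i\ge\prod_i\exp\bigl(\avgint_Q\log w_i\bigr)$, and each factor on the left of~\eqref{eqn:multRH1} is controlled by $\exp\bigl(\avgint_Q\log w_i\bigr)$ via the reverse H\"older inequality together with the exponential characterization of $A_\infty$. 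This is conceptually cleaner and, as you note, never uses $\sum 1/s_i=1$; the paper's approach has the virtue of being self-contained given Lemma~\ref{lemma:jn}, without invoking the exponential $A_\infty$ characterization as a black box.

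Two small corrections. First, the inequality $\avgint_Q w\le K\exp\bigl(\avgint_Q\log w\bigr)$ is usually attributed to Hru\v{s}\v{c}ev; the name Fujii--Wilson is normally reserved for the maximal-function form $\sup_Q w(Q)^{-1}\int_Q M(w\chi_Q)<\infty$. Second, your heuristic for deriving this bound from $RH_{s_i}$ is backwards: since $t\mapsto\bigl(\avgint_Q w^t\bigr)^{1/t}$ is \emph{increasing}, letting $t\to 0^+$ only yields the trivial direction $\exp\bigl(\avgint_Q\log w\bigr)\le\avgint_Q w$. The nontrivial direction genuinely requires the $A_\infty$ machinery (e.g., passing through membership in some $A_p$), so your instinct to treat it as a standard black box is the right one; just don't claim it follows from monotonicity alone.
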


The proof requires the rescaling properties of Muckenhoupt weights
from~\cite[Theorems~2.1, 2.2]{cruz-uribe-neugebauer95} which we state
here as a lemma.  (Also see~\cite{Johnson:1987wt}.)

\begin{lemma} \label{lemma:jn}  
Given a weight $w$:
\begin{enumerate}
\item $w\in RH_s$, $1<s<\infty$, if and only if $w^s \in A_\infty$;

\item $w\in A_p\cap RH_s$, $1<p,\,s<\infty$ if and only if $w^s \in
  A_q$, $q=s(p-1)+1$.   
\end{enumerate}
\end{lemma}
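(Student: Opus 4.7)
The plan is to prove part~(2) first by direct algebraic manipulation and then deduce part~(1) from it. The key algebraic observation to verify at the start is that $q-1 = s(p-1)$ and $s(1-q') = 1 - p'$, so that $(w^s)^{1-q'} = w^{1-p'}$. With these identifications in hand, the $A_q$ quantity for $w^s$ becomes
\[ \left(\avgint_Q w^s\right)\left(\avgint_Q w^{1-p'}\right)^{s(p-1)}. \]

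For the forward direction of part~(2), I would apply the $RH_s$ bound $\avgint_Q w^s \leq [w]_{RH_s}^s \bigl(\avgint_Q w\bigr)^s$, substitute into the display above, and recognize the result as the $s$-th power of the $A_p$ quantity. This yields $[w^s]_{A_q} \leq [w]_{RH_s}^s\, [w]_{A_p}^s$.

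For the reverse direction of part~(2), Jensen's inequality $\avgint_Q w \leq \bigl(\avgint_Q w^s\bigr)^{1/s}$ plugged into the $A_p$ quantity, followed by the $A_q$ condition, immediately gives $[w]_{A_p} \leq [w^s]_{A_q}^{1/s}$. To obtain $w \in RH_s$, I would isolate $\avgint_Q w^s \leq [w^s]_{A_q} \bigl(\avgint_Q w^{1-p'}\bigr)^{-s(p-1)}$ from the $A_q$ condition, then apply H\"older with exponents $p, p'$ to $1 = w^{1/p} \cdot w^{-1/p}$ to derive the elementary bound $\bigl(\avgint_Q w^{1-p'}\bigr)^{-(p-1)} \leq \avgint_Q w$. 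Raising this to the $s$-th power and substituting produces $\avgint_Q w^s \leq [w^s]_{A_q}\bigl(\avgint_Q w\bigr)^s$, i.e., $w \in RH_s$.

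Part~(1) then follows from part~(2) with one additional step. For the forward implication, H\"older's inequality with exponents $s, s'$ applied to $w(E) = \int_E w \cdot 1$ for $E \subset Q$, combined with the $RH_s$ condition, yields $w(E)/w(Q) \leq [w]_{RH_s}(|E|/|Q|)^{1/s'}$, which is the standard $A_\infty$ characterization of $w$; hence $w \in A_p$ for some $p > 1$, and the forward direction of~(2) gives $w^s \in A_q \subset A_\infty$. Conversely, $w^s \in A_\infty$ means $w^s \in A_q$ for some $q > 1$, and setting $p = 1 + (q-1)/s > 1$ in the reverse direction of~(2) delivers $w \in RH_s$. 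The main obstacle will be the exponent bookkeeping in part~(2)—tracking the relations among $p, q, s, p', q'$ and verifying the key identity $(w^s)^{1-q'} = w^{1-p'}$—but once this matching is established, both directions reduce to routine applications of H\"older and Jensen.
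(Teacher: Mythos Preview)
Your argument is correct: the exponent identity $s(1-q')=1-p'$ makes the $A_q$ quantity for $w^s$ line up exactly with $[w]_{A_p}^s$ modulated by the $RH_s$ constant, and the reverse direction via Jensen and the H\"older trick $1=w^{1/p}\cdot w^{-1/p}$ is clean. The deduction of part~(1) from part~(2), using the measure-ratio characterization of $A_\infty$ to place $w$ in some $A_p$, is also standard and sound.

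You should be aware, however, that the paper does \emph{not} prove this lemma at all. It is stated as a known rescaling result and simply cited from \cite[Theorems~2.1, 2.2]{cruz-uribe-neugebauer95} (with a secondary reference to Johnson--Neugebauer). So you are not reproducing or diverging from the paper's proof; you are supplying a self-contained argument where the authors chose to quote the literature. Your write-up would serve well as a proof if one wanted the paper to be self-contained, but for the purposes of comparison there is nothing in the paper to compare it to.
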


\begin{proof}
  By Lemma~\ref{lemma:jn}, for each $i$, since $w_i\in RH_{s_i}$, $w_i^{s_i} \in A_\infty$.
  Moreover, there exists $0<r<1$ such that
  $w_i^{rs_i} \in A_2\cap RH_{\frac{1}{r}}$; since the $A_p$ classes
  are nested ($A_p\subset A_q$ if $p<q$) we can find a
  single value of $r$ that works for all
  $i$.  Therefore,
  if we apply the reverse H\"older inequality, the $A_2$ condition,
  multilinear H\"older's inequality with exponents $s_i$, and then
  H\"older's inequality twice more, we
  get that for every cube $Q$,
\begin{multline*}
\prod_{i=1}^m \left(\avgint_Q w_i^{s_i}\,dx\right)^{\frac{1}{s_i}}
 \leq C \prod_{i=1}^m \left(\avgint_Q w_i^{rs_i}\,dx\right)^{\frac{1}{rs_i}} 
 \leq C \prod_{i=1}^m \left(\avgint_Q w_i^{-rs_i}\,dx\right)^{-\frac{1}{rs_i}}   \\
 \leq C\bigg(\avgint_Q \prod_{i=1}^m w_i^{-r}\,dx\bigg)^{-\frac{1}{r}} 
 \leq C\bigg(\avgint_Q \prod_{i=1}^m w_i^{r}\,dx\bigg)^{\frac{1}{r}}
 \leq C\avgint_Q \prod_{i=1}^m w_i\,dx.
\end{multline*}
\end{proof}

\begin{remark}
In the bilinear case, a version of this result was proved
in~\cite[Theorem~2.6]{cruz-uribe-neugebauer95}.  However, our proof
here is considerably simpler.  After this paper was submitted, we
learned that a similar result had been proved independently by Xue and Yan~\cite{Xue:2012jw}.
\end{remark}

Theorem~\ref{thm:main-thm} has two corollaries.  The first when $p=2$
is part of the folklore of harmonic analysis, but we have not been
able to find a proof in the literature.  We include this result
since we prove a multilinear version below: see Theorem~\ref{thm:multAinfty}.

\begin{corollary}
For $1<p<\infty$, if a weight $w$ is such that $w,\,w^{1-p'}\in
A_\infty$, then $w\in A_p$.  
\end{corollary}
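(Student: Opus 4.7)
The plan is to apply Theorem~\ref{thm:multRH} in the bilinear case with the exponents dictated by the $A_p$ condition, and to set up the two weights so that their product equals $1$ pointwise.

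First I would let $\sigma = w^{1-p'}$ and record the pointwise identity $w(x)^{1/p}\sigma(x)^{1/p'} = 1$, which follows from $(1-p')(p-1) = -1$ (equivalently, $w\sigma^{p-1}\equiv 1$). This is the key observation that will collapse the right-hand side of the multilinear reverse H\"older inequality to a constant.

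Next I would apply Theorem~\ref{thm:multRH} with $m=2$, $s_1=p$, $s_2=p'$, to the weights $u_1 = w^{1/p}$ and $u_2 = \sigma^{1/p'}$. The hypotheses to check are $u_1\in RH_p$ and $u_2 \in RH_{p'}$, but by Lemma~\ref{lemma:jn}(a) these are equivalent to $u_1^p = w \in A_\infty$ and $u_2^{p'} = \sigma \in A_\infty$, which are precisely the assumptions of the corollary. The theorem then gives, for every cube $Q$,
\[
\left(\avgint_Q w\,dx\right)^{1/p}\left(\avgint_Q \sigma\,dx\right)^{1/p'}
\leq C\avgint_Q w^{1/p}\sigma^{1/p'}\,dx = C,
\]
using the pointwise identity in the last equality.

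Finally, raising both sides to the power $p$ and using $p/p' = p-1$ yields
\[
\left(\avgint_Q w\,dx\right)\left(\avgint_Q w^{1-p'}\,dx\right)^{p-1} \leq C^p,
\]
uniformly in $Q$, which is exactly $w\in A_p$. The main conceptual step is choosing the exponents $(s_1,s_2) = (p,p')$ and the weights $(w^{1/p},\sigma^{1/p'})$ so that Lemma~\ref{lemma:jn} converts the $A_\infty$ hypotheses into reverse H\"older hypotheses of the correct order; once that is in place, the computation is essentially immediate.
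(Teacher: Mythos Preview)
Your proof is correct and is essentially the same as the paper's: the paper sets $w_1=w^{1/p}$, $w_2=w^{-1/p}$, $s_1=p$, $s_2=p'$, which is exactly your choice since $\sigma^{1/p'}=w^{(1-p')/p'}=w^{-1/p}$. You spell out the verification of the $RH_{s_i}$ hypotheses via Lemma~\ref{lemma:jn}(a) and the pointwise identity $w^{1/p}\sigma^{1/p'}=1$ a bit more explicitly, but the argument is identical.
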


\begin{proof}
  Let $w_1=w^{\frac{1}{p}}$, $w_2=w^{-\frac{1}{p}}$, and $s_1=p$,
  $s_2=p'$.  Then by Lemma~\ref{lemma:jn}, $w_i \in RH_{s_i}$, and
  inequality~\eqref{eqn:multRH1} becomes the $A_p$ condition.
\end{proof}

Our second corollary is a version of Theorem~\ref{thm:multRH}
that is particularly useful for applications to multilinear weights.

\begin{corollary}\label{cor:multRH} 
  Let $1<p_1,\ldots,p_m<\infty$, $0<p<\infty$ be such that
  $\sum_{i=1}^m\frac1{p_i}=\frac1p$, and let $\vw=(w_1,\ldots,w_m)$ be
  a vector of weights.  If $w_i\in A_\infty$, $1\leq i\leq m$, 
  then for all cubes $Q$,
$$\prod_{i=1}^m \left(\avgint_Qw_i\,dx\right)^{\frac{p}{p_i}}\leq C
\avgint_Q \prod_{i=1}^m w_i^{\frac{p}{p_i}}\,dx. $$
\end{corollary}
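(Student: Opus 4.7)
The plan is to reduce Corollary \ref{cor:multRH} directly to Theorem \ref{thm:multRH} by the natural substitution $v_i = w_i^{p/p_i}$ and $s_i = p_i/p$. Indeed, with these choices the quantity $v_i^{s_i}$ becomes $w_i$, so the left-hand side $\prod_i (\avgint_Q v_i^{s_i}\,dx)^{1/s_i}$ of Theorem \ref{thm:multRH} equals $\prod_i (\avgint_Q w_i\,dx)^{p/p_i}$, while the right-hand side $\avgint_Q \prod_i v_i \,dx$ equals $\avgint_Q \prod_i w_i^{p/p_i}\,dx$, which is precisely what we want.

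Before invoking Theorem \ref{thm:multRH}, I need to check its two structural hypotheses. First, $\sum_i 1/s_i = p\sum_i 1/p_i = p \cdot (1/p) = 1$, as required. Second, I need each $s_i > 1$. Since every $1/p_i$ is strictly positive and $\sum_j 1/p_j = 1/p$, we have $1/p_i < 1/p$, hence $p_i > p$ and therefore $s_i = p_i/p > 1$. Note in particular that no constraint on $p$ beyond $p>0$ is needed here; the inequalities $p_i > p$ come for free from the relation $\sum 1/p_i = 1/p$ together with $p_i > 1$.

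Finally, I must verify the reverse-Hölder hypothesis $v_i \in RH_{s_i}$. This is immediate from Lemma \ref{lemma:jn}(1): $v_i \in RH_{s_i}$ if and only if $v_i^{s_i} = w_i \in A_\infty$, which is the standing assumption. With all hypotheses of Theorem \ref{thm:multRH} verified for $(v_1,\ldots,v_m)$ with exponents $(s_1,\ldots,s_m)$, the conclusion of that theorem, translated back in terms of the $w_i$'s, is exactly the claimed inequality.

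There is no substantial obstacle here; the only thing to watch is the bookkeeping of the exponents, in particular confirming that $p_i > p$ so that $s_i > 1$ lies in the admissible range of Theorem \ref{thm:multRH}. Once the substitution $v_i = w_i^{p/p_i}$ is identified, the corollary is essentially a rewriting of Theorem \ref{thm:multRH}.
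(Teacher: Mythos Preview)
Your proposal is correct and follows exactly the approach of the paper: set $s_i = p_i/p$, use Lemma~\ref{lemma:jn}(1) to get $w_i^{p/p_i}\in RH_{s_i}$ from $w_i\in A_\infty$, and apply Theorem~\ref{thm:multRH}. Your additional bookkeeping (checking $\sum 1/s_i = 1$ and $s_i>1$, the latter tacitly assuming $m\ge 2$, the case $m=1$ being trivial) is fine and makes the reduction explicit.
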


\begin{proof}
  By Lemma~\ref{lemma:jn}, for
  each $i$, since $w_i \in A_\infty$ we have
  $w_i^{\frac{p}{p_i}}\in RH_{\frac{p_i}{p}}$.  The desired inequality
  now
  follows from  Theorem~\ref{thm:multRH} with $s_i=\frac{p_i}{p}$.
\end{proof}

\bigskip

We now give three applications of Theorem~\ref{thm:multRH} (or more
properly, Corollary~\ref{cor:multRH}) to the theory of multilinear
weights.  We first consider the structure of multilinear $A_\vp $
weights.  To put our result into context, we briefly sketch the
history of multilinear weighted norm inequalities.  A multilinear
Calder\'on-Zygmund singular integral operator $T$ is an $m$-linear
operator with kernel $K$ such that
\[ T(\vf)(x) = \int_{\R^{mn}} K(x,y_1,\ldots,y_m) \prod_{i=1}^m f_i(y_i)\,dy_1\ldots dy_m, \]
where $\vf=(f_i,\ldots,f_m)$, $f_i \in C^\infty_c(\R^n)$ and $x\not\in \bigcap_{i=1}^m \supp(f_i)$, and the kernel satisfies certain regularity estimates.  (Precise definitions are not necessary for our purposes; see Grafakos and Torres~\cite{MR1880324} for details and further references.)
These operators satisfy the following $L^p$ space estimates:  if $1<p_1,\ldots,p_m<\infty$ and
$\frac{1}{p}=\sum\frac{1}{p_i}$, then
\[ \|T(\vf)\|_{L^p}\leq C\prod_{i=1}^m \|f_i\|_{L^{p_i}}. \]

Weighted norm inequalities for multilinear singular integral operators were first considered by Grafakos and Torres~\cite{MR1947875}.  They showed that if $p_0=\min(p_1,\ldots,p_m)$ and $w\in A_{p_0}$, then 
\[ \|T(\vf)\|_{L^p(w)}\leq C\prod_{i=1}^m \|f_i\|_{L^{p_i}(w)}. \]
This result was generalized by Grafakos and Martell~\cite{MR2030573}, who showed that if $w_i\in A_{p_i}$ and $w= \prod w_i^{\frac{p}{p_i}}$, then
\begin{equation} \label{eqn:wtd-mult}
\|T(\vf)\|_{L^p(w)}\leq C\prod_{i=1}^m \|f_i\|_{L^{p_i}(w_i)}. 
\end{equation}

The optimal class of weights for which \eqref{eqn:wtd-mult} holds was found by Lerner, {\em et al.}~\cite{MR2483720}.  Given $\vp=(p_1,\ldots,p_m)$, $\vw=(w_1,\ldots,w_m)$ and $w= \prod w_i^{\frac{p}{p_i}}$,  $\vw\in A_\vp$ if 
\[ [\vw]_{A_\vp}= \sup_Q \left(\avgint_Q w\,dx\right)^{\frac{1}{p}}
\prod_{i=1}^m \left(\avgint_Q w_i^{1-p_i'}\,dx\right)^{\frac{1}{p_i'}} <\infty. \]
It follows at once by H\"older's
inequality that if $w_i\in A_{p_i}$, then $\vw \in A_\vp$.  Such
weights are referred to as product $A_\vp$ weights.\footnote{If the
  weights $w_i$ are taken as prior, then this name makes sense.  If
  the weight $w$ is taken as prior, then it makes more sense to refer
  to these weights as ``factored'' $A_\vp$ weights.  However,
  ``product weights'' has become the standard terminology.}  However,
in~\cite{MR2483720} they constructed examples of weights
$\vw\in A_\vp$ such that the weights $w_i \not\in L^1_{\mathrm{Loc}}$
and so not in $A_{p_i}$.

Here we prove that if we impose some additional regularity on the
weights $w_i$, then the weight $\vw$ is a product weight.

\begin{theorem} \label{thm:main-thm}
Given $1<p_1,\ldots,p_m<\infty$ and $\frac{1}{p}=\sum\frac{1}{p_i}$, suppose $\vw=(w_1,\ldots,w_m)\in A_\vp$.  If $w_i\in A_\infty$, $1\leq i \leq m$, then $w_i\in A_{p_i}$.  
\end{theorem}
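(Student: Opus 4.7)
The plan is to reduce the statement directly to Corollary~\ref{cor:multRH}, which essentially lets us replace the single weight $w=\prod w_i^{p/p_i}$ appearing in the $A_\vp$ condition by a product of terms involving the individual $w_i$. Once this replacement is made, the $A_\vp$ expression factors into a product of $A_{p_i}$ expressions, and a standard Jensen/H\"older observation shows that each factor is bounded below by $1$, which forces each to be bounded above by the full product.

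More precisely, first I would fix a cube $Q$ and apply Corollary~\ref{cor:multRH}: since each $w_i\in A_\infty$, we obtain
\[ \prod_{i=1}^m \left(\avgint_Q w_i\,dx\right)^{\frac{p}{p_i}} \leq C \avgint_Q \prod_{i=1}^m w_i^{\frac{p}{p_i}}\,dx = C\avgint_Q w\,dx. \]
Taking the $1/p$ power and substituting into the definition of $[\vw]_{A_\vp}$ yields
\[ \prod_{i=1}^m \left[\left(\avgint_Q w_i\,dx\right)^{\frac{1}{p_i}}\left(\avgint_Q w_i^{1-p_i'}\,dx\right)^{\frac{1}{p_i'}}\right] \leq C\,[\vw]_{A_\vp}. \]

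The next step is to notice that the $i$-th bracketed factor is exactly the $1/p_i$-power of the usual $A_{p_i}$ ratio of $w_i$ on $Q$ (using $(p_i-1)/p_i = 1/p_i'$). Using $1 = \avgint_Q w_i^{1/p_i}\cdot w_i^{-1/p_i}\,dx$ together with H\"older's inequality with exponents $p_i,p_i'$, and the arithmetic identity $-p_i'/p_i = 1-p_i'$, each such factor is at least $1$. Therefore each factor is bounded above by $C[\vw]_{A_\vp}$, and taking the supremum over $Q$ yields $w_i\in A_{p_i}$ for every $i$.

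I do not anticipate any genuine obstacle: the whole argument is a one-line application of Corollary~\ref{cor:multRH} followed by the elementary factorization of the $A_\vp$ condition. The only point to keep track of is the exponent bookkeeping that matches $(\avgint_Q w\,dx)^{1/p}$ against $\prod_i(\avgint_Q w_i\,dx)^{1/p_i}$, so that the resulting expression splits cleanly into $m$ independent $A_{p_i}$-type quantities.
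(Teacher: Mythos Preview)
Your proposal is correct and follows essentially the same approach as the paper: both use Corollary~\ref{cor:multRH} to replace $\avgint_Q w\,dx$ by $\prod_i(\avgint_Q w_i\,dx)^{p/p_i}$, and both use the H\"older lower bound $1\le (\avgint_Q w_i)^{1/p_i}(\avgint_Q w_i^{1-p_i'})^{1/p_i'}$ to isolate each factor. The only difference is the order in which these two ingredients are invoked, which is immaterial.
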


\begin{proof}[Proof of Theorem~\ref{thm:main-thm}]
By H\"older's inequality, for every $i$ and every cube $Q$,
\[ 1 \leq \left(\avgint_Q w_i\,dx\right)^{\frac{p}{p_i}}
\left(\avgint_Q w_i^{1-p_i'}\,dx\right)^{\frac{p}{p_i'}}. \]
Thus, if we fix a value of $i$,
\begin{multline*}
\bigg[\avgint_Q w_i\,dx
\left(\avgint_Q w_i^{1-p_i'}\,dx\right)^{p_i-1}\bigg]^{\frac{p}{p_i}}
= \left(\avgint_Q w_i\,dx\right)^{\frac{p}{p_i}}
\left(\avgint_Q w_i^{1-p_i'}\,dx\right)^{\frac{p}{p_i'}} \\
 \leq \prod_{i=1}^m \left(\avgint_Q w_i\,dx\right)^{\frac{p}{p_i}}
\left(\avgint_Q w_i^{1-p_i'}\,dx\right)^{\frac{p}{p_i'}}. 
\end{multline*}
By Corollary~\ref{cor:multRH}, 
\begin{multline*}
 \prod_{i=1}^m \left(\avgint_Q w_i\,dx\right)^{\frac{p}{p_i}}
\left(\avgint_Q w_i^{1-p_i'}\,dx\right)^{\frac{p}{p_i'}} \\
\leq C\bigg(\avgint_Q \prod_{i=1}^m w_i^{\frac{p}{p_i}}\,dx\bigg)
\prod_{i=1}^m \left(\avgint_Q w_i^{1-p_i'}\,dx\right)^{\frac{p}{p_i'}} 
\leq C[\vw]_{A_\vp}. 
\end{multline*}
If we combine these two inequalities, we see that $w_i \in A_{p_i}$.  
\end{proof}

\medskip

It is tempting to speculate that the condition $w_i\in A_\infty$ in
Theorem~\ref{thm:main-thm} can be weakened to $w_i \in
L^1_{\mathrm{Loc}}$.  This, however, is false, and we construct a
counter-example.  We note in passing that our example also gives an
example of a weight $\vec{w}\in A_{\vec{p}}$ that is not a product
weight; our construction is somewhat simpler than that given
in~\cite[Remark~7.2]{MR2483720}. 

For simplicity we consider the bilinear case $m=2$ on the real line and we will use the characterization $\vec{w}\in A_{\vec{p}}$ if and only if $w\in A_{2p}$ and $\sigma_i=w^{1-p_i'}\in A_{2p_i'}$ (see \cite[Theorem 3.6]{MR2483720}).  Let
\begin{equation*}
w_1(x) =  \frac{\chi_{[-1,1]}(x)}{|x|\log(e/|x|)^2}+ \chi_{[-1,1]^c}(x)\ \ \text{and} \ \ w_2(x)=1.
\end{equation*}
First, we have that $w=w_1^{\frac{p}{p_1}}$ belongs to $A_1\subset A_{2p}$. To see this, note that for $0<a<1$ and $b>0$, if we define
\[ v(x) = \frac{\chi_{[-1,1]}(x)}{|x|^a\log(e/|x|)^b} + \chi_{[-1,1]^c}(x), \]
then $v\in A_1$.  To show this, it suffices to check the $A_1$ condition on intervals $[0,t]$, $0<t<1$ (cf.~\cite{MR1406495}), but in this case it is straightforward to show that
\begin{equation}\label{eqn:logcalc} \int_0^t \frac{ds}{s^a\log(e/s)^b} \approx \frac{1}{t^{a-1}\log(e/t)^b}. \end{equation}
(Just compare the integrand to the derivative of the righthand side.)  Since $p/p_1<1$ we have that $w\in A_1$.
Next we will show that the weight $\sigma_1=w_1^{1-p_1'}$ belongs to $A_{2p_1'}$.  To see this, consider the dual weight (dual in the class $A_{(2p_1)'}$) $\sigma_1^{1-(2p_1')'}$ which is given by
$$\sigma_1(x)^{1-(2p_1')'}=w_1(x)^{(p_1'-1)((2p_1')'-1)}=w_1(x)^{\frac{p_1'-1}{2p_1'-1}}.$$
Again, since $\frac{p_1'-1}{2p_1'-1}<1$ the calculation \eqref{eqn:logcalc} shows that $\sigma_1^{1-(2p_1')'}\in A_1\subset A_{(2p_1')'}$ which implies $\sigma_1\in A_{2p_1'}$.  This shows $\vw \in A_{\vp}$.  However,
$w_i\not\in A_{p_i}$: since
$w_1 \in L^1_{\mathrm{Loc}}\setminus \bigcup_{p>1} L^p_{\mathrm{Loc}}$
it cannot satisfy a reverse H\"older inequality and so is not in
$A_\infty$.

\begin{remark}
It would be interesting to have an intrinsic characterization of those
$A_{\vec{p}}$ weights that can be factored as product weights.  
\end{remark}

\medskip

The second application of Corollary~\ref{cor:multRH} relates to the
characterization of $A_{\vec{p}}$ weights in \cite{MR2483720} that we
used above.  They proved that $\vec{w}\in A_{\vec{p}}$ if and only if
$w\in A_{mp}$ and $\sigma_i\in A_{mp_i'}$.  We can relax this assumption.

\begin{theorem} \label{thm:multAinfty} 
  Suppose $1<p_1,\ldots,p_m<\infty$, $\frac1p=\sum_i\frac1{p_i}$ and
  $\vec{w}=(w_1,\ldots,w_m)\in A_{\vec{p}}$ is a vector weight with
  $w=\prod_i w^{\frac{p}{p_i}}$ and $\sigma_i=w_i^{1-p_i'}$.  Then
  $\vec{w}=(w_1,\ldots,w_m)\in A_{\vec{p}}$ if and only if
  $w,\sigma_1,\ldots,\sigma_m\in A_\infty$.
\end{theorem}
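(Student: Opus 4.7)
The plan is to handle the two directions separately. The forward direction is immediate from the characterization of $A_{\vec{p}}$ recalled just before the theorem: if $\vec{w}\in A_{\vec{p}}$, then $w\in A_{mp}\subset A_\infty$ and $\sigma_i\in A_{mp_i'}\subset A_\infty$ for each $i$. So the substantive task is the converse, where I assume $w,\sigma_1,\ldots,\sigma_m\in A_\infty$ and must bound $[\vec{w}]_{A_{\vec{p}}}$ uniformly over cubes.

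The strategy is to apply Corollary~\ref{cor:multRH} a single time to the $(m+1)$-tuple of $A_\infty$ weights $w,\sigma_1,\ldots,\sigma_m$, with exponents $q_0=mp$ for $w$ and $q_i=mp_i'$ for $\sigma_i$, $1\le i\le m$. A short computation shows
\[ \frac{1}{q_0}+\sum_{i=1}^m\frac{1}{q_i}=\frac{1}{mp}+\frac{1}{m}\Big(m-\frac{1}{p}\Big)=1, \]
so Corollary~\ref{cor:multRH} (applied with $m+1$ weights and with its parameter $p$ equal to $1$) yields
\[ \Big(\avgint_Q w\,dx\Big)^{\frac{1}{mp}}\prod_{i=1}^m\Big(\avgint_Q\sigma_i\,dx\Big)^{\frac{1}{mp_i'}}\le C\avgint_Q w^{\frac{1}{mp}}\prod_{i=1}^m\sigma_i^{\frac{1}{mp_i'}}\,dx. \]

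The key observation, and the real content of the argument, is that the integrand on the right is identically $1$. Writing $w^{1/(mp)}=\prod_j w_j^{1/(mp_j)}$ and, using the identity $1-p_i'=-p_i'/p_i$, $\sigma_i^{1/(mp_i')}=w_i^{-1/(mp_i)}$, the two products cancel termwise. Hence the right-hand side collapses to the constant $C$; raising both sides to the $mp$-th power and then taking the $p$-th root gives $[\vec{w}]_{A_{\vec{p}}}\le C^m$, which is precisely $\vec{w}\in A_{\vec{p}}$.

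The only real obstacle is spotting the correct exponents: they must both satisfy the harmonic-mean condition of Corollary~\ref{cor:multRH} and force the integrand on the right to trivialize. This second requirement is dictated by the pointwise duality $w_i^{p/p_i}\sigma_i^{p/p_i'}=1$, which pins down the choice $(mp,mp_1',\ldots,mp_m')$ essentially uniquely. Once this alignment is noticed, Corollary~\ref{cor:multRH} does all the real work and the remainder is routine bookkeeping.
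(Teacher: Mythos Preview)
Your proof is correct and follows essentially the same approach as the paper: apply Corollary~\ref{cor:multRH} to the $(m+1)$-tuple $w,\sigma_1,\ldots,\sigma_m$ with exponents $mp,mp_1',\ldots,mp_m'$, then observe that the resulting integrand $w^{1/(mp)}\prod_i\sigma_i^{1/(mp_i')}$ is identically~$1$. Your write-up is in fact more explicit than the paper's in justifying the forward direction, verifying the exponent identity $\frac{1}{mp}+\sum_i\frac{1}{mp_i'}=1$, and spelling out the cancellation in the integrand.
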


\begin{proof} 
  It is enough to show that if $w,\sigma_1,\ldots,\sigma_m$ belong to
  $A_\infty$, then $\vec{w}\in A_{\vec{p}}$.  Notice that
$$\frac{1}{p}+\sum^m_{i=1}\frac{1}{p_i'}=m.$$
Then, for any cube $Q$ we have
\begin{multline*}
\left(\dashint_Q w\,dx\right)^{\frac1p}\prod_{i=1}^m\left(\dashint_Q \sigma_i\,dx\right)^{\frac{1}{p_i'}}=\left[\left(\dashint_Q w\,dx\right)^{\frac1{mp}}\prod_{i=1}^m\left(\dashint_Q \sigma_i\,dx\right)^{\frac{1}{mp_i'}}\right]^m \\
\leq C\left(\dashint_Q w^{\frac{1}{mp}}\sigma_1^{\frac{1}{p_i'm}}\cdots \sigma_m^{\frac{1}{p_m'm}}\,dx\right)^m=C.
\end{multline*}
\end{proof}

Our final application of Corollary~\ref{cor:multRH} is to
two-weight multilinear norm inequalities.  In
\cite{MR2483720}, the authors introduced a new multilinear maximal function as a tool to prove the estimates \eqref{eqn:wtd-mult}:
$$\M(\vf)(x)=\sup_{Q\ni x} \prod_{i=1}^m \left( \, \dashint_Q |f(y_i)|\,dy_i\right).$$
They proved that 
\[ \|\M(\vf)\|_{L^p(w)} \leq C\prod_{i=1}^m \|f_i\|_{L^{p_i}(w_i)} \]
if and only if $\vec{w}\in A_{\vec{p}}$.  This result led naturally to
the question of proving two-weight inequalities for $\M$:  more
precisely, inequalities of the form
\begin{equation}\label{eqn:twoweightM}
  \|\M(\vec{f\sigma})\|_{L^p(u)}\leq C\prod_{i=1}^m \|f_i\|_{L^{p_i}(\sigma_i)},\end{equation}
where $\vec{f\sigma}=(f_1\sigma_1,\ldots,f_m\sigma_m)$.
Chen and Dami\'an \cite{MR3118310} defined a
multilinear analog of the Sawyer testing condition for the
Hardy-Littlewood maximal operator:  
$(u,\vec{\sigma})\in S_\vp$ if there exists a constant $C$ such that
for all cubes $Q$,
\begin{equation*}
\left(\int_Q \M(\vec{\sigma}\chi_Q)^p u\,dx\right)^{\frac1p}\leq
C\prod_{i=1}^m \sigma_i(Q)^{\frac1{p_i}}.
\end{equation*} 
The $S_{\vp}$ condition is necessary for the two weight norm
inequality \eqref{eqn:twoweightM}, but it is not known if
it is also sufficient.  Chen and Dami\'an \cite[Theorem
3.2]{MR3118310} proved that it is sufficient if they also assume that
the weights satisfy a multilinear reverse H\"older
condition: for all cubes $Q$,
\begin{equation}\label{eqn:multiRH}
\prod_{i=1}^m \left(\avgint_Q\sigma_i\,dx\right)^{\frac{p}{p_i}}\leq C
\avgint_Q \prod_{i=1}^m\sigma_i^{\frac{p}{p_i}}\,dx.
 \end{equation}
 Therefore, by Corollary~\ref{cor:multRH}, if we assume the weights
 $\sigma_i$ are in $A_\infty$, then we get that the condition $S_\vp$ is sufficient the
 two weight norm inequality \eqref{eqn:twoweightM}.

\begin{theorem}\label{thm:twoweightresultM} 
Suppose $u,\sigma_1,\ldots,\sigma_m$ are weights, $1<p_1,\ldots,p_m<\infty$, and $p$ is defined by $\frac1p=\sum_{i=1}^m\frac{1}{p_i}$.  If $\sigma_i\in A_\infty$ and $(u,\vec{\sigma})\in S_\vp$, then the two-weight norm inequality \eqref{eqn:twoweightM} holds.
\end{theorem}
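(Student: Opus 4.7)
The plan is to observe that this theorem is essentially a direct consequence of Corollary~\ref{cor:multRH} combined with the result of Chen and Damián \cite[Theorem~3.2]{MR3118310}, which the authors have already isolated in the paragraph preceding the statement. So the proof should be short: I only need to verify that the hypothesis $\sigma_i\in A_\infty$ supplies the multilinear reverse H\"older inequality \eqref{eqn:multiRH} that Chen--Damián require as an extra assumption.

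First, I would apply Corollary~\ref{cor:multRH} with the vector of weights $\vec{w}=\vec{\sigma}=(\sigma_1,\ldots,\sigma_m)$ and the same exponents $\vec{p}=(p_1,\ldots,p_m)$ and $p$ as in the statement. Since $\sum \frac{1}{p_i} = \frac{1}{p}$ and each $\sigma_i\in A_\infty$, the hypotheses of Corollary~\ref{cor:multRH} are met, and we conclude that for every cube $Q$,
\[
\prod_{i=1}^m \left(\dashint_Q \sigma_i\,dx\right)^{\frac{p}{p_i}}
\leq C \dashint_Q \prod_{i=1}^m \sigma_i^{\frac{p}{p_i}}\,dx,
\]
which is exactly the multilinear reverse H\"older condition \eqref{eqn:multiRH}.

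Second, having both the testing hypothesis $(u,\vec{\sigma})\in S_{\vec{p}}$ (given) and the reverse H\"older condition \eqref{eqn:multiRH} (just derived), I invoke \cite[Theorem~3.2]{MR3118310} to conclude that the two-weight norm inequality \eqref{eqn:twoweightM} holds.

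There is no real obstacle here, since the whole point of Theorem~\ref{thm:twoweightresultM} is to package Corollary~\ref{cor:multRH} as an $A_\infty$-type replacement for the somewhat opaque condition \eqref{eqn:multiRH} in Chen and Damián's sufficient condition. The only thing to be careful about is matching the exponent $\frac{p}{p_i}$ in Corollary~\ref{cor:multRH} with the exponent appearing in \eqref{eqn:multiRH}, which is immediate from the definition of $p$.
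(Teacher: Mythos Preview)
Your proposal is correct and matches the paper's approach exactly: the paper does not give a formal proof of this theorem but states just before it that the result follows by combining Corollary~\ref{cor:multRH} (applied to the $\sigma_i$) with Chen--Dami\'an's \cite[Theorem~3.2]{MR3118310}, which is precisely what you wrote out.
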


\begin{remark}
  It is an open question to characterize the $m$-tuples of weights for
  which~\eqref{eqn:multiRH} holds.  A version of this
  question was posed in~\cite[Theorem~2.7]{cruz-uribe-neugebauer95};
  they showed that when $m=2$, if~\eqref{eqn:multRH1} holds
  and $w_1\in RH_{s_1}$, then $w_2 \in RH_{s_2}$.
\end{remark}

\begin{remark}
For an additional application of Corollary~\ref{cor:multRH},
see~\cite{LC-DCU}, where it is used to prove necessary conditions on
$b$ for the bilinear commutators $[b,T]_i$ to satisfy weighted norm
inequalities, where $T$ is a bilinear
Calder\'on-Zygmund singular integral or a bilinear fractional integral
operator. 
\end{remark}

\bibliographystyle{plain}
\bibliography{multAp}

\end{document}